\newtheorem{thm}{Theorem}
\newtheorem{lemma}{Lemma}
\newtheorem{coro}{Corollary}
\let\paragraph\subsection
\title{Fusion inequality for quadratic cohomology} 
\author{Oliver Knill}
\date{June 24, 2024}
\address{Department of Mathematics \\ Harvard University \\ Cambridge, MA, 02138 }
\subjclass{}
\keywords{Quadratic cohomology}
\begin{document}
\maketitle

\begin{abstract}
Classical simplicial cohomology on a simplicial complex $G$ deals with functions on
simplices $x \in G$. Quadratic cohomology \cite{valuation,CohomologyWu}
deals with functions on pairs of simplices $(x,y) \in G \times G$
that intersect. If $K,U$ is a closed-open pair in $G$, we prove here a quadratic version
of the linear fusion inequality \cite{CohomologyOpenSets}.
Additional to the quadratic cohomology of $G$ there are five additional
interaction cohomology groups. Their Betti numbers are computed from functions on
pairs $(x,y)$ of simplices that intersect. Define the Betti vector
$b(X)$ computed from pairs $(x,y) \in X \times X$ with $x \cap y \in X$ a
and $b(X,Y)$ with pairs in $X \times Y$ with $x \cap y \in K$. We prove the fusion inequality
$b(G) \leq b(K)+b(U)+b(K,U)+b(U,K)+b(U,U)$ for cohomology groups
linking all five possible interaction cases. Counting shows
$f(G) = f(K)+f(U) + f(K,U)+f(U,K)+f(U,U)$ for the f-vectors.
Super counting gives Euler-Poincar\'e
$\sum_k (-1)^k f_k(X)=\sum_k (-1)^k b_k(X)$ and $\sum_k (-1)^k f_k(X,Y)=\sum_k (-1)^k b_k(X,Y)$ for
$X,Y \in \{U,K\}$. As in the linear case, also the proof of the quadratic fusion inequality
follows from the fact that the spectra of all the involved Laplacians $L(X),L(X,Y)$
are bounded above by the spectrum of the quadratic Hodge Laplacian $L(G)$ of $G$.
\end{abstract}

\section{In a nutshell}

\paragraph{}
We prove here that if $K$ is a sub-complex of a finite abstract simplicial complex $G$ and $U$ is the
open complement $U=G \setminus K$ \cite{Alexandroff1937,FiniteTopology}, there are besides the quadratic
cohomology of $G$ five quadratic cohomology groups belonging to the five 
{\bf quadratic Hodge Laplacians} $L(X),L(X,Y)$ with $X,Y \in \{U,K\}$. 
They all satisfy spectral inequalities:

\begin{thm}[Spectral inequality]  \label{spectralinequality}
$\lambda_k(L) \leq \lambda_k(L(G))$ \end{thm}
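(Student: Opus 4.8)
The strategy follows the proof of the linear fusion inequality and uses only two classical facts about Hermitian matrices: Weyl monotonicity, $B \preceq C \Rightarrow \lambda_k(B) \le \lambda_k(C)$ for every $k$, and the Cauchy interlacing theorem, $\lambda_k(M') \le \lambda_k(M)$ whenever $M'$ is a principal submatrix of the Hermitian matrix $M$; here the eigenvalues $\lambda_1 \ge \lambda_2 \ge \cdots$ are always listed in decreasing order, which is the convention under which the spectral inequality (and hence the later Betti comparison) holds. The plan is to exhibit each of the five Laplacians as a principal block of $L(G)$ minus a positive semidefinite operator, and then chain the two facts.

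First I would recall the quadratic cochain complex $(\mathcal{C}(G),d)$: $\mathcal{C}(G)$ is the space of functions on the intersecting pairs $(x,y)\in G\times G$, i.e.\ those with $x\cap y\neq\emptyset$, graded by $\dim(x)+\dim(y)$, with $d$ the quadratic exterior derivative, $d^2=0$, and $L(G)=(d+d^*)^2 = dd^*+d^*d$ the quadratic Hodge Laplacian, all written in the orthonormal basis of indicator functions of the intersecting pairs. Splitting the index set into its five interaction types — pairs in $K\times K$, in $K\times U$, in $U\times K$, in $U\times U$ with $x\cap y\in U$, and in $U\times U$ with $x\cap y\in K$ — gives an orthogonal decomposition $\mathcal{C}(G)=\mathcal{C}(K)\oplus\mathcal{C}(U)\oplus\mathcal{C}(K,U)\oplus\mathcal{C}(U,K)\oplus\mathcal{C}(U,U)$ with associated projections. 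The structural heart of the argument, to be checked type by type, is that $d$ is block triangular for the partial order on the five pieces in which $K\times K$ is least, then $K\times U$ and $U\times K$, then $U\times U$ with intersection in $K$, and finally $U\times U$ with intersection in $U$: this is forced by $K$ being closed, since on passing from a simplex to a cofacet a coordinate can leave $K$ for $U$ but never return, and likewise an intersection can only grow out of $K$, never back into it. It follows that each diagonal block $d_{XX}$ squares to zero and is precisely the intrinsic differential of the $X$-complex, whose Hodge Laplacian is $L(X)$, respectively $L(X,Y)$.

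Next I would compute the diagonal blocks of $L(G)$. Writing $d=(d_{YX})$ in block form with respect to this decomposition and using $(d^*)_{YX}=(d_{XY})^*$, one finds $P_X L(G) P_X = \sum_Y d_{XY}d_{XY}^* + \sum_Y d_{YX}^*d_{YX}$, where $P_X$ is the projection onto $\mathcal{C}(X)$. The terms with $Y=X$ sum to $d_{XX}d_{XX}^*+d_{XX}^*d_{XX}=L(X)$, and the remaining terms form $R_X=\sum_{Y\neq X}\bigl(d_{XY}d_{XY}^*+d_{YX}^*d_{YX}\bigr)$, a sum of Gram operators $BB^*$ and $B^*B$, hence positive semidefinite. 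Thus $L(X)\preceq P_X L(G)P_X$. For each $k$, Weyl monotonicity then gives $\lambda_k(L(X))\le\lambda_k(P_X L(G)P_X)$, while Cauchy interlacing applied to the principal submatrix $P_X L(G)P_X$ of $L(G)$ gives $\lambda_k(P_X L(G)P_X)\le\lambda_k(L(G))$; together these yield $\lambda_k(L(X))\le\lambda_k(L(G))$, and the identical argument covers each $L(X,Y)$.

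The step I expect to be the main obstacle is the combinatorial bookkeeping behind the block-triangularity claim: one must run through all face incidences between pairs, determine for each which of the five regions the smaller and larger pair lie in, verify that $d$ never decreases in the chosen order, and confirm that the diagonal block $d_{XX}$ coincides with the intrinsic differential of $\mathcal{C}(X)$ including signs. Everything after that is the two-line matrix estimate above.
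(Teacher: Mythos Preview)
Your argument is correct. Both your route and the paper's rest on the same structural fact---that the restricted Dirac operator is the principal submatrix $P_X D(G) P_X$ of $D(G)$---but they exploit it differently. The paper stays at the Dirac level: it stresses that while $L(X)$ is \emph{not} a principal submatrix of $L(G)$, the matrix $D(X)$ \emph{is} one of $D(G)$; Cauchy interlacing is then applied to $D$, and the symmetric spectrum $\sigma(D)=-\sigma(D)$ carries the inequality over to $L=D^2$. You instead work entirely at the Laplacian level: your block-triangularity of $d$ yields the Loewner estimate $L(X)=(P_XD(G)P_X)^2 \preceq P_X D(G)^2 P_X = P_X L(G) P_X$ (this is just the general identity $PA^2P=(PAP)^2 + PA(I-P)AP$ with the last term positive), after which Weyl monotonicity plus interlacing on $L(G)$ finish. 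Your approach buys you freedom from the spectral symmetry of $D$ and makes the positivity of the ``remainder'' $R_X$ explicit; the paper's approach is shorter once one accepts the principal-submatrix observation for $D$, and it makes clearer why one must pass through $D$ rather than argue with $L$ alone. Your block-triangularity check is more than strictly necessary (the Loewner step only needs $d_{XX}$ to be the intrinsic differential, which is essentially definitional), but it does give an independent verification that each $d_{XX}^2=0$.
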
 

\paragraph{}
The assumption is that all eigenvalues are ordered in an ascending order and 
that they are {\bf padded left} in comparision with the eigenvalues of $G$. 
This result parallels the linear simplicial cohomology case \cite{HodgeInequality}, 
where $U$ and $K$ can not yet interact and $L$ is one of the Hodge Laplacians 
$L(K),L(U)$ for simplicial cohomology. 

\paragraph{}
In the linear case, the Betti vectors satisfied
the fusion inequality $b(G) \leq b(K) + b(U)$ \cite{CohomologyOpenSets}.
This linear fusion inequality had followed from the spectral inequality and the fact that 
cohomology classes are null-spaces of matrices. Counting gave $f(G)=f(K) + f(U)$ 
for the {\bf $f$-vectors} and the {\bf Euler-Poincar\'e formula} was
$\chi(X)=\sum_{k}(-1)^k f_k(X)=\sum_{k}(-1)^k b_k(X)$ for $X \in \{G,K,U\}$,
seen directly by heat deformation using the {\bf McKean-Singer symmetry}, rephrasing 
that $D_X$ is an isomorphism between even and odd parts of image of
the Laplacian $L_X=D_X^2$, implying ${\rm str}(L^k)=0$ for $k \geq 1$ so that
${\rm str}(\exp(-t L(X))={\rm str}(1_X)=\chi(X)$. 

\paragraph{}
In the quadratic cohomology case, where we look at functions on pairs 
of intersecting simplices, there is the cohomology of $G$ leading to $b(G)$ 
and five interaction cohomologies. They each lead to {\bf Betti vectors}. 
We call them $b(K),b(U)$,$b(K,U),b(U,K)$, and $b(U,U)$. 
Besides pointing out that we have these new cohomologies, we 
give here a relation between them and the cohomology of $G$. We call it 
the {\bf quadratic fusion inequality}.
We could use the heavier notation $b(X,Y,Z)$ with $X,Y,Z \in \{K,U\}$ 
dealing with functions on pairs $(x,y) \in X \times Y$ with $x \cap y \in Z$,
but we prefer to stick to the simpler notation: one reason is that 
both $b(U),b(K)$ deal with internal cohomology of $U$ and $K$ and do 
not involve simplices of the other set,
while $b(K,U),b(U,K),b(U,U)$ involve both sets $U$ and $K$. So, while 
the quadratic Betti vectors $b(U),b(K)$ are {\bf intrinsic} and only depend on
one of the sets $U$ or $K$, the others are not. We prove: 

\begin{thm}[Quadratic fusion inequality]
\label{fusioninequality}
$b(G) \leq b(K)+b(U)+b(K,U) + b(U,K) + b(U,U)$.
\end{thm}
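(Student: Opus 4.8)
The plan is to imitate the proof in the linear case: stratify the set of intersecting pairs into five pieces, build a filtration of the quadratic cochain complex of $G$ by subcomplexes whose five successive quotients are the five interaction complexes, and then invoke the Spectral Inequality (Theorem~\ref{spectralinequality}) in its zero-eigenvalue form along this filtration.

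\paragraph{}
First I would set up the stratification. Let $\mathcal{P}(G)=\{(x,y)\in G\times G:\ x\cap y\neq\emptyset\}$ be the set of intersecting pairs, with $(x,y)$ placed in degree $\dim x+\dim y$; the quadratic cochain complex $C(G)$ of $G$ is built on $\mathcal{P}(G)$. Because $K$ is closed, $x\in K$ forces every face of $x$, in particular $x\cap y$, to lie in $K$, and likewise for $y\in K$; hence $x\cap y\in U$ can only occur when $x,y\in U$, and $\mathcal{P}(G)$ is the disjoint union of the five types $T_1=(K,K,K)$, $T_2=(U,U,U)$, $T_3=(K,U,K)$, $T_4=(U,K,K)$, $T_5=(U,U,K)$, where $(A,B,C)$ abbreviates $x\in A,\ y\in B,\ x\cap y\in C$. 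Counting pairs degree by degree gives the $f$-vector identity $f(G)=f(K)+f(U)+f(K,U)+f(U,K)+f(U,U)$ of the abstract, and it identifies the graded vector space underlying each interaction complex with the span of the pairs of the corresponding type, so that $T_1,T_2,T_3,T_4,T_5$ carry the complexes computing $b(K),b(U),b(K,U),b(U,K),b(U,U)$ respectively.

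\paragraph{}
Next I would produce the filtration. Call $S\subseteq\mathcal{P}(G)$ \emph{coface-closed} if every one-step coface pair $(x'',y)$ or $(x,y'')$ of a pair $(x,y)\in S$ lies again in $S$; for such $S$ the span $C_S$ of the pairs in $S$ is a subcomplex of $C(G)$ for the quadratic coboundary $d$. Using closedness of $K$ one checks that every set in the chain
\[ \emptyset\ \subset\ T_2\ \subset\ T_2\cup T_5\ \subset\ T_2\cup T_5\cup T_3\ \subset\ \mathcal{P}(G)\setminus T_1\ \subset\ \mathcal{P}(G) \]
is coface-closed; these are, successively, the empty set, the pairs with $x\cap y\in U$, those with $x,y\in U$, those with $y\in U$, those with $x\in U$ or $y\in U$, and everything, and passing to a coface only enlarges $x$ and $y$ and hence, by closedness, cannot move either of them out of $U$. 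This produces a filtration $0=F_0\subset F_1\subset\cdots\subset F_5=C(G)$ of $C(G)$ by subcomplexes, with $F_j/F_{j-1}$ supported on $T_2,T_5,T_3,T_4,T_1$ in turn. One then has to check that the coboundary induced on each quotient $F_j/F_{j-1}$ is the intrinsic ``mixed'' coboundary used to define $b(U),b(U,U),b(K,U),b(U,K),b(K)$, namely the one that retains only the face pairs staying of the same type; for $T_1$ and $T_2$ this is automatic (all face pairs of a $T_1$-pair remain in $T_1$ because $K$ is closed, and the $T_2$-differential is the intrinsic quadratic coboundary of the open set $U$), and for $T_3,T_4,T_5$ it again follows from closedness of $K$.

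\paragraph{}
Finally, for any short exact sequence $0\to C'\to C\to C''\to 0$ of finite cochain complexes the long exact cohomology sequence sandwiches $H^k(C)$ between $H^k(C')$ and $H^k(C'')$, giving $b_k(C)\le b_k(C')+b_k(C'')$ in every degree; in Hodge language this reflects that the harmonic space of $C$ splits into a part supported on the subcomplex $C'$, which is harmonic for $C'$, and a complement whose image in $C''$ spans at most $H^k(C'')$. The quantitative statement behind this is exactly Theorem~\ref{spectralinequality}: realising $C'$ and $C''$ as the two summands of an orthogonal splitting of $C$ makes $d$ block triangular, so each of $L(C')$ and $L(C'')$ is bounded above by a principal submatrix of $L(C)$ (the corresponding diagonal block equals that Laplacian plus a positive semidefinite term), whence its ordered spectrum, padded left by zeros, lies below that of $L(C)$; applied along our filtration this bounds all five Laplacians $L(K),L(U),L(K,U),L(U,K),L(U,U)$ above by $L(G)$. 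Applying $b_k(C)\le b_k(C')+b_k(C'')$ to each step $0\to F_{j-1}\to F_j\to F_j/F_{j-1}\to 0$ and telescoping now gives $b_k(G)=b_k(F_5)\le\sum_{j=1}^{5}b_k(F_j/F_{j-1})=b_k(K)+b_k(U)+b_k(K,U)+b_k(U,K)+b_k(U,U)$, the quadratic fusion inequality. The main obstacle is the filtration step: exhibiting a flag of coface-closed subsets of $\mathcal{P}(G)$ whose consecutive differences are single types (the five types are not individually coface-closed, only the unions above are), and checking that the filtration induces precisely the intrinsic interaction coboundaries on the five quotients rather than larger operators; once that is settled the remaining steps are formal, the last being only the zero-eigenvalue case of the Spectral Inequality.
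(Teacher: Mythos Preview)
Your proof is correct, and it takes a genuinely different route from the paper's.

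The paper argues purely spectrally: it proves Theorem~\ref{spectralinequality} by removing locally maximal (resp.\ minimal) simplices one at a time, so that at each step the Dirac matrix of the smaller piece is a principal submatrix of the larger one, and Cauchy interlace (Lemma~\ref{leftpaddedmonotonicity}) gives the left-padded eigenvalue comparison. The fusion inequality then follows by assembling the five Laplacians into the block-diagonal matrix $L(U)\oplus L(K)\oplus L(K,U)\oplus L(U,K)\oplus L(U,U)$ of the same size as $L(G)$ and observing that it must have at least as many zero eigenvalues as $L(G)$.

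Your argument is homological rather than spectral: you exhibit an explicit five-step filtration $0=F_0\subset\cdots\subset F_5=C(G)$ of the quadratic cochain complex by coface-closed strata, identify the successive quotients with the five interaction complexes, and then telescope the elementary inequality $b_k(C)\le b_k(C')+b_k(C'')$ coming from the long exact sequence. This bypasses the eigenvalue comparison entirely; in particular it never uses that $D$ is self-adjoint or that one can peel off simplices one by one. A pleasant by-product is that it explains \emph{why} the intrinsic differentials on $(K,U),(U,K),(U,U)$ square to zero: they are quotient differentials of a genuine cochain complex, which is not obvious from the raw definition. Conversely, the paper's route gives more: it controls \emph{all} eigenvalues of the five Laplacians, not just the multiplicity of zero. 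Your final paragraph in fact recovers this as well, by noting that the block-triangular shape of $d$ in the splitting $C'\oplus C''$ makes each diagonal block of $L(G)$ equal to the corresponding interaction Laplacian plus a positive semidefinite correction; combined with interlace for principal blocks this is an alternative (and arguably cleaner) proof of Theorem~\ref{spectralinequality} than the simplex-by-simplex reduction in the paper.
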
 

\paragraph{}
The quadratic Betti vector $b(X)$ belongs to $\{ (x,y), x \cap y \in X \}$,
the vector $b(K,U)$ belongs to all $(x,y) \in K \times U$ with $x \cap y \in K$,
and $b(U,K)$ belongs to all $(x,y) \in U \times K$ with $x \cap y \in K$,
and $b(U,U)$ belongs to all $(x,y) \in U \times U$ with $x \cap y \in K$. 
Note that $x \cap y \in K$ if $x,y \in K$ so that $b(K,K)$ is accounted for
in $b(K)$ already. With the heavier notation, from the eight cases $b(X,Y,Z)$
with $X,Y,Z \in \{K,U\}$ only $b(K)=b(K,K,K),b(U)=b(U,U,U),b(K,U)=b(K,U,K),
b(U,K)=b(U,K,K),b(U,U)=b(U,U,K)$ can occur. 

\paragraph{}
The {\bf quadratic characteristics} are defined as
$w(X)=\sum_{x,y \in X, x \cap y \in X} w(x) w(y)$  (Wu characteristic) and 
$w(X,Y)=\sum_{x \in X,y \in Y, x \cap y \in K} w(x) w(y)$ and
$w(U,U)=\sum_{x \in U,y \in U, x \cap y \in U} w(x) w(y)$, we have 
$w(G)=w(U) +w(K) + w(U,K) + w(K,U) + w(U,U)$ which follows
from the fact that $f$-vectors for quadratic cohomology satisfy
$f(G) =f(K)+f(U)+f(K,U) + f(U,K) +f(U,U)$. 
Any of the cohomologies for $X \in \{ G,U,K,(U,K),(K,U),(U,U) \}$
satisfy the Euler-Poincar\'e formula, following heat deformation with $L_X$
or $L_{X,Y}$ using the McKean-Singer symmetry. 

\paragraph{}
Why is this interesting? If $G$ is finite abstract simplicial complex that 
is a finite $d$-manifold, and $f$ is an arbitrary
function from $G$ to $P=\{0, \dots, k\}$, then the {\bf discrete Sard theorem}
\cite{DiscreteAlgebraicSets}
assures that the open set $U = \{ x \in G, f(x)=P \}$ 
is either empty or a $(d-k)$-manifold in the sense that the graph with vertex set 
$U$ and edge set $\{ (x,y)$ for which $x \subset y$ or $y \subset x \}$ is a 
discrete $(d-k)$-manifold. Since $U$ is open, the complement 
$K=G\setminus U$ is closed and so a sub-simplicial complex. 
All these cohomologies are topological invariants. 

\paragraph{}
For example, if $G$ is a discrete $3$-sphere and 
$f:G \to \{0,1,2\}$ is arbitrary, then $U$ is either empty, 
a {\bf knot} or a {\bf link}, a finite union of
closed disjoint (possibly interlinked) $1$-manifolds in the 3-sphere $G$. 
The simplicial cohomology of $U$ and the quadratic 
cohomology of $U$ are not interesting: they are just circles: $b(U)=(l,l)$, where
$l$ is the number of connected components of $U$. 
The simplicial cohomology of $K$ however can be interesting and leads to knot or 
link invariants. It is well studied in the continuum as it is a
{\bf knot invariant} or a {\bf link invariant}. 
Additional interaction cohomologies that take into account interaction between 
$U$ and the complement $K$ are completely unexplored. 
The inequality shows however that in general, more cohomology classes are created
when splitting up $G$ into $U \cup K$. Unlike in the linear case, we have now
the possibility of particles (harmonic forms) that are functions of $(x,y)$ with 
$x \in U, y \in K$ with $x \cap y \in K$ and also of function on $(x,y)$ with 
$x \in U, y \in U$ with $x \cap y \in K$. 

\paragraph{}
The quadratic case we look at here would generalizes in a straightforward way to 
{\bf higher characteristics}. One starts with $m=1$, the 
linear characteristic which is {\bf Euler characteristic}. 
The second $m=2$ is quadratic characteristic or Wu characteristic going back to Wu in 1959.
\footnote{Historically, multi-linear valuations were considered in
\cite{Wu1959,Gruenbaum1970,valuation} and its cohomology in
\cite{CohomologyWuCharacteristic}. Quadratic cohomology is to Wu characteristic what 
simplicial cohomology is to Euler characteristic.}
In general, we would look for $m$-tuples of points $(x_1,\dots, x_m)$ 
that do simultaneously intersect. 
There are then much more $m$-point interaction cohomologies and $b(G)$ is again 
bounded above by all possible cases of $b(X_1,\dots, X_m)$ with $X_i \in \{U,K\}$ 
and the intersection in $K$. The cases $b(U,U,\dots, U)$ has cases $(x_1, \dots x_m) \in U^m$
with $\bigcap x_i \in U$ which is part of $b_U$ and a new part where $\bigcap x_i \in K$. 
As in the case $m=2$, the cohomology of $b(K,K, \dots, K)$ is part of $b(K)$.
We have to consider the Betti vector $b(U)$ for functions on 
$\{ (x_1,\dots,x_m) \in U^m, \bigcap_i x_i \in U \}$
and $b(U,\dots, U)$ referring to functions on 
$\{ (x_1,\dots,x_m) \in U^m, \bigcap_i x_i \in K \}$, where $b(K,\dots, K)=b(K)$ in the
notation used before. If all $x_i$ are in $U$, then the intersection can be either in $U$
or $K$, but if all $x_i$ are in $K$, then the intersection must be in $K$, not warranting
to distinguish $b(K, \dots, K)$ and $b(K)$. Despite the obvious duality $U \leftrightarrow K$, 
there is an asymmetry in that $x \in U, y \in U$ allows $x\cap y$ to be in $U$ or $K$ while
$x \in K, y \in K$ implies $x \cap y \in K$: technically, a closed set $K$ is a 
$\pi$-system while an open set $U$ is not unless it is $\emptyset$ or $G$. 

\begin{thm}
$b(G) \leq b(U)+b(K) + \sum_{X_i \in \{U,K\}} b(X_1,\dots, X_m)$.
\end{thm}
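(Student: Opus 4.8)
The plan is to mimic the proof strategy of Theorem~\ref{fusioninequality} (the $m=2$ quadratic fusion inequality), which in turn mimics the linear case: everything reduces to a spectral comparison plus the fact that Betti numbers are dimensions of kernels (equivalently, multiplicities of the eigenvalue $0$). So the first step is to set up, for the higher characteristic of order $m$, the exterior-type derivative $d(G)$ acting on functions on simultaneously-intersecting $m$-tuples $(x_1,\dots,x_m) \in G^m$ with $\bigcap_i x_i \in G$, its adjoint $d(G)^*$, and the associated Hodge Laplacian $L(G) = d(G) d(G)^* + d(G)^* d(G)$. The Betti vector $b(G)$ is then the vector of dimensions of the harmonic spaces $\ker L(G)$ graded by the relevant degree. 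One does the same for each piece: $L(U)$ and $L(K)$ (the intrinsic $m$-point Laplacians built only from tuples inside $U$ resp. $K$), and the interaction Laplacians $L(X_1,\dots,X_m)$ built from tuples $(x_1,\dots,x_m) \in X_1 \times \cdots \times X_m$ whose intersection lies in $K$.

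Second, I would verify the combinatorial decomposition of the underlying chain modules. The set of all simultaneously-intersecting $m$-tuples of simplices of $G$ decomposes as a disjoint union over the "type" vector $(X_1,\dots,X_m) \in \{U,K\}^m$ together with the location of the intersection. As the excerpt notes, if all $X_i = K$ then automatically $\bigcap x_i \in K$ (a closed set is a $\pi$-system), giving the single block $b(K)$; if all $X_i = U$ then the intersection may land in $U$ (this block is the intrinsic $b(U)$) or in $K$ (a genuinely new block $b(U,\dots,U)$); in every mixed case the intersection is forced into $K$. This yields the $f$-vector identity $f(G) = f(U) + f(K) + \sum_{X_i \in \{U,K\}} f(X_1,\dots,X_m)$ and, crucially, an orthogonal direct-sum decomposition of the cochain Hilbert space on which $L(G)$ acts, into the blocks on which $L(U)$, $L(K)$, and the various $L(X_1,\dots,X_m)$ act.

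Third — and this is the heart of the matter, exactly as in Theorems~\ref{spectralinequality} and~\ref{fusioninequality} — I would establish the spectral inequality $\lambda_k(L) \le \lambda_k(L(G))$ for each block Laplacian $L$, with eigenvalues sorted ascending and padded on the left. The mechanism is that $L(G)$ in the block decomposition is a $d(G)$ that is block-\emph{upper-triangular} (a tuple's coderivative can only move mass toward "more closed" / intersection-in-$K$ configurations, never back), so each block Laplacian $L$ is, up to the relevant compression, a principal submatrix-like piece obtained by restricting the quadratic form $\langle L(G) u, u\rangle$ to the corresponding subspace and discarding off-block coupling. An interlacing / min-max (Cauchy interlacing, or the Weyl-type bound used in \cite{HodgeInequality}) argument then gives $\lambda_k(L) \le \lambda_k(L(G))$ after left-padding. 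Granting this, the fusion inequality is immediate: $b_k(L) = \#\{j : \lambda_j(L) = 0\}$ in degree $k$, and since every eigenvalue $\lambda_j(L)$ that is $0$ forces the correspondingly-padded $\lambda_j(L(G))$ to be $0$ as well, summing the zero-multiplicities over the finitely many blocks gives $b(G) \le b(U) + b(K) + \sum_{X_i \in \{U,K\}} b(X_1,\dots,X_m)$ componentwise.

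The main obstacle is Step three: proving that the off-block coupling in $d(G)$ genuinely has the triangular structure that makes the block Laplacians spectral minorants. For $m=2$ this is the content of the cited results, but for general $m$ one must check that the boundary map on simultaneously-intersecting $m$-tuples respects the partial order "number of coordinates in $K$, plus indicator that the common intersection lies in $K$" — i.e.\ that applying $d$ never decreases this quantity. Once that monotonicity is in hand, the interlacing argument and the passage from spectra to Betti numbers are routine and parallel the $m=1,2$ cases; the $f$-vector identity and the Euler--Poincar\'e statements follow by the same McKean--Singer heat-kernel supertrace argument recalled in the excerpt, since each block Laplacian is again a square $D^2$ with $D$ an isomorphism between even and odd parts of its image.
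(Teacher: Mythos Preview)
Your high-level strategy---decompose the chain space into blocks, prove a spectral inequality for each block Laplacian against $L(G)$, then count zero eigenvalues---is exactly the paper's. But your Step~3 mechanism does not match the paper's and, as written, has a gap. You argue via block-triangularity of $d(G)$ and then say each block Laplacian is obtained by ``restricting the quadratic form $\langle L(G)u,u\rangle$ to the corresponding subspace and discarding off-block coupling.'' Restricting $L(G)$ to a block gives the \emph{principal submatrix} $(L(G))_{SS}$, which is \emph{not} the block Laplacian $L(\text{block})=D(\text{block})^2$; the two differ by the positive-semidefinite term $D(G)_{S^c,S}^{\,T}D(G)_{S^c,S}$. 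The paper flags precisely this point: ``the matrix $L$ does not have the property that taking away \dots\ simplices produces principal sub-matrices which themselves come from a geometry. But the Dirac matrix $D$ does.'' Accordingly, the paper's proof works at the level of $D$: each $D(\text{block})$ \emph{is} literally a principal submatrix of $D(G)$ (this holds for any block of the partition, no triangularity needed), Cauchy interlacing is applied to $D$, and the symmetry of $\sigma(D)$ about $0$ transfers the inequality to $L=D^2$. Your triangularity observation about $d$ is true but is neither used nor sufficient for the argument you sketch.

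A smaller slip: in your last paragraph of Step~3 the implication is reversed. From $\lambda_j(\text{block})\le\lambda_j(L(G))$ and nonnegativity one gets $\lambda_j(L(G))=0\Rightarrow\lambda_j(\text{block})=0$, not the other way. The correct packaging (and what the paper does) is to compare the full decoupled operator $\bigoplus L(\text{block})$---an $n\times n$ matrix---against $L(G)$: each zero of $L(G)$ forces a zero of the decoupled operator, hence $\dim\ker\bigl(\bigoplus L(\text{block})\bigr)=\sum b(\text{block})\ge b(G)$.
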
 

There would again be heavier notation $b(X_1, \dots, X_m,X_0)$ dealing
with $(x_1, \dots, x_m) \in X_1 \times \cdots \times X_m$ with $\bigcap_{k=1}^m x_k \in X_0$
and have $b(G) \leq \sum_{X_i \in \{U,K\}} b(X_1,\dots, X_m,X_0)$ taking into account that
some of the cases like $(K, \dots, K,U)$ are empty because $X_0=U$ only is interesting if 
all $X_1= \cdots = X_m=K$. Again, like in the case $m=2$, we have two cases $b(U),b(K)$
which are {\bf intrinsic} while the other cases involve simplices from both $U$ and $K$. 

\paragraph{}
We study here higher order chain complexes in finite geometries. Each of the
situations is defined by a triple $(X,D,R)$, where $X$ is a finite set of $n$ elements,
$D=d+d^*$ is a finite $n \times n$ matrix such that $d^2=0$ and where $R$
is the dimension function compatible with $D$ in the sense that the blocks of $L=D^2$ have
constant dimension. We can call this structure an {\bf abstract delta set} because
every delta set defines such a structure, but where instead of face maps, we just go
directly to the exterior derivative $d$. The advantage of looking at the Dirac setting is
that $D$ can be much more general than coming from face maps. 
It could be a deformed Dirac matrix for example obtained by isospectral deformation 
$D'=[D^+-D^-,D]$ \cite{IsospectralDirac,IsospectralDirac2}, which keeps the spectrum of $D$
invariant but produces $D=d+d^* + B$ leading to new exterior derivatives $d(t)$, 
a deformation which is invisible to the Hodge Laplacian as $D^2(t)=L$ is not affected. 
Since $d(t)$ gets smaller, this produces an expansion of space. In general, also in the 
continuum, there is an inflationary start of expansion. 

\section{A small example}

\paragraph{}
Lets illustrate the quadratic fusion inequality in the case $K_2$: 

\paragraph{}
The linear simplicial cohomology is given by 
$(\left[ \begin{array}{c} \{1\} \\ \{2\} \\ \{1,2\} \end{array} \right],
  \left[\begin{array}{ccc}0&0&-1\\0&0&1\\-1&1&0\\\end{array}\right],
  R=\left[ \begin{array}{c} 0 \\ 0 \\ 1 \end{array} \right] )$. 
Take the open-closed pair $U=\{ \{1,2\}\}$ and $K=\{ \{1\},\{2\} \}$ leading
to the abstract delta set structures 
$(\left[\begin{array}{c} \{1\} \\ \{2\} \end{array} \right],
  \left[\begin{array}{cc} 0&0\\0&0 \\ \end{array}\right],
  \left[\begin{array}{c} 0 \\ 0 \end{array} \right] )$ 
$([ \{1,2\} ],\left[\begin{array}{c} 0 \\ \end{array}\right],[1])$. The Betti vectors
are $b(G)=(1,0),b(U)=(0,1),b(K)=(2,0)$ and the f-vectors are $f(G)=(2,1),f(U)=(0,1),f(K)=(2,0)$. 
The fusion inequality $b(G)<b(K)+b(U)$ is here strict. Merging $U$ and $K$ fuses 
a harmonic $0$ form in $K$ with the $1$-form in $U$.
Betti vectors have been considered since Betti and Poincar\'e. Finite topological
spaces were first looked at by Alexandroff \cite{Alexandroff1937}. For cohomology of open 
sets in finite frame works, see \cite{CohomologyOpenSets}. 

\paragraph{}
If we look at {\bf quadratic cohomology} for $G$, where we have the abstract delta set 
$(X,D,R)=$ 
$$( \left[ \begin{array}{cc}
      \{2\} & \{2\} \\
      \{1\} & \{1\} \\
      \{1,2\} & \{2\} \\
      \{1,2\} & \{1\} \\
      \{2\} & \{1,2\} \\
      \{1\} & \{1,2\} \\
      \{1,2\} & \{1,2\} \\
      \end{array} \right],
     \left[ \begin{array}{ccccccc}
                   0 & 0 & -1 & 0 & 1 & 0 & 0 \\
                   0 & 0 & 0 & 1 & 0 & -1 & 0 \\
                   -1 & 0 & 0 & 0 & 0 & 0 & -1 \\
                   0 & 1 & 0 & 0 & 0 & 0 & 1 \\
                   1 & 0 & 0 & 0 & 0 & 0 & -1 \\
                   0 & -1 & 0 & 0 & 0 & 0 & 1 \\
                   0 & 0 & -1 & 1 & -1 & 1 & 0 \\
           \end{array} \right], 
      \left[ \begin{array}{c}0\\0\\1\\1\\1\\1\\2\end{array}\right]) \; . $$
The Hodge Laplacian $L=D^2=L_0 \oplus L_1 \oplus L_2$ has the Hodge blocks:
$$ L_0=\left[ \begin{array}{cc}
                   2 & 0 \\
                   0 & 2 \\
        \end{array} \right],
   L_1=\left[ \begin{array}{cccc}
                   2 & -1 & 0 & -1 \\
                   -1 & 2 & -1 & 0 \\
                   0 & -1 & 2 & -1 \\
                   -1 & 0 & -1 & 2 \\
       \end{array} \right],
   L_2=\left[ \begin{array}{c}
                   4 \\
       \end{array} \right] $$
with Betti vector $b(G)=(0,1,0)$ and f-vector $f(G)=(2,4,1)$ and
Wu characteristic $w(G)=f_0-f_1+f_2=2-4+1 = b_0-b_1+b_2=0-1+0=-1$. 
The eigenvalues of $L_1$ are $(0,2,2,4)$, the null-space is spanned
by $[1,1,1,1]$. By accident $L_1$ happens to be a Kirchhoff matrix of $C_4$.
If $K_2$ is seen as a 1-manifold with boundary $\delta G$ \footnote{We usually assume
that manifolds with boundary have an interior. The Barycentric refinement of a complete graph
$K_{d+1}$ would be a $d$-manifold with boundary.} 
we have $w(G)=\chi(G)-\chi(\delta(G)) = 1-2=-1$, illustrating that
in general, for manifolds $M$ with boundary $\delta M$, the Wu characteristic is 
$\chi(M)-\chi(\delta M)$. 

\paragraph{}
Now to $U=\{ \{1,2\} \}$, where we have the abstract delta set structure $(X,D,R)=$
$$ (\left[ \begin{array}{cc} \{1,2\} & \{1,2\} \\ \end{array} \right],
   \left[ \begin{array}{c} 0 \\ \end{array} \right],
   \left[ \begin{array}{c} 2 \end{array} \right]) \; . $$ 
with $b(U)=(0,0,1)$ and $f(U)=(0,0,1)$ and $w(U)=1$. \\
For $K=\{ \{1\},\{2\} \}$ we have the abstract delta set structure $(X,D,R) =$
$$ (\left[ \begin{array}{cc} \{2\} & \{2\} \\ \{1\} & \{1\} \\ \end{array} \right],
    \left[ \begin{array}{cc} 0 & 0 \\ 0 & 0 \\ \end{array} \right], 
    \left[ \begin{array}{c} 0 \\ 0 \end{array} \right] ) \;  $$
with $b(K)=(2,0,0)$ and $f(U)=(2,0,0)$ and $w(K)=2$. Obviously the intrinsic
cohomologies of $U$ and $K$ are not yet giving a complete picture. 
The simplices in $U$ and $K$ can interact as we see next. 

\paragraph{}
Now, we turn to the interactions of $K$ with $U$
$$ (\left[ \begin{array}{cc} \{2\} & \{1,2\} \\ \{1\} & \{1,2\} \\ \end{array} \right],
    \left[ \begin{array}{cc} 0 & 0 \\ 0 & 0 \\ \end{array} \right], 
    \left[ \begin{array}{c} 1 \\ 1 \end{array} \right] ) \; .$$
$$ (\left[ \begin{array}{cc} \{1,2\} & \{2\} \\ \{1,2\} & \{1\} \\ \end{array} \right],
    \left[ \begin{array}{cc} 0 & 0 \\ 0 & 0 \\ \end{array} \right], 
    \left[ \begin{array}{c} 1 \\ 1 \end{array} \right] ) \; $$
with $b(K,U)=b(U,K) = (0,2,0)$ and $f(U)=(0,2,0)$ and $w(U,K)=-2$. There is no 
pair $(x,y) \in U^2$ such that $x \cap y \in K$ so that $b(U,U)=(0,0,0)$. 

\paragraph{}
To summarize, we have
$$ \begin{array}{|c|c|c|c|}  \hline
  Case & Betti & f-vector & Characteristic \\ \hline
    U  & (0,0,1) & (0,0,1) & 1  \\
    K  & (2,0,0) & (2,0,0) & 2  \\
 (U,K) & (0,2,0) & (0,2,0) & -2 \\
 (K,U) & (0,2,0) & (0,2,0) & -2 \\
 (U,U) & (0,0,0) & (0,0,0) &  0 \\
    G  & (0,1,0) & (2,4,1) & -1 \\ \hline
\end{array} \; .  $$
The quadratic fusion inequality 
$b(U)+b(K)+b(U,K) + b(K,U) + b(U,U) = (2,3,1) > b(G)=(0,1,0)$
is here strict. The fusion has two $0$-form-$1$-form mergers and 
one $1$-form-$2$-form merger. The difference in the fusion inequality is
is $(1,1,0) + (1,1,0) + (0,1,1)=(2,3,1)$. 

\paragraph{}
We see already in this small example, how the closed ``laboratory" $K$ and the 
``observer space" $U$ are no more strictly separated, even so they partition the
``world" $G$. The ``tunneling" between $K$ and $U$ is described using algebraic 
topology, expressed by cohomology groups.
Unlike for simplicial cohomology which features homotopy invariance, there
is only {\bf topological invariance}. Already the Wu characteristic of 
contractible balls depends on the dimension. [For 
a $d$-ball, the Wu characteristic $w(M)$ is $(-1)^d$ where $d$ is the dimension
and illustrates that $w(M)=\chi(M)-\chi(\delta M)$ in general for discrete manifolds $M$
with boundary $\delta M$ and that for a $d$-ball, the boundary is a $d-1$ sphere
with Euler characteristic $\chi(\delta M) = 1-(-1)^d$.]
If we take a $d$-ball in a $d$-dimensional simplicial complex and replace the interior 
to get an other d-ball without changing the boundary, then the cohomology does not 
change because we can for any positive $k$ add add gauge fields 
(k-forms that are coboundaries) $dg$ to render a cocycle zero
in the interior (without changing the equivalence class) and use the heat flow to 
get back a harmonic form after doing the surgery in the interior. This implements
the {\bf chain homotopy} when doing a local homeomorphic deformation: move the 
field away from the ``surgergy place", do the surgery, then use the heat flow to 
``heal the wound" and get back harmonic forms. 

\paragraph{}
We have just given the argument for the following result:

\begin{thm}
All quadratic cohomology groups $b(X),b(X,Y)$ are topological 
invariants. 
\end{thm}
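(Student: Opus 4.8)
``Topological invariant'' here means: if $(G;K,U)$ and $(G';K',U')$ are closed--open pairs whose underlying finite spaces are homeomorphic by a map carrying $K$ onto $K'$, then the six Betti vectors agree. By the Alexander--Newman theorem such an equivalence is realized by a finite sequence of stellar subdivisions and inverse stellar subdivisions, and each of these is a \emph{local surgery move}: one fixes the closed complement $A=\overline{G\setminus\mathring B}$ of an open ball $\mathring B$ whose closure $B$ is a triangulated $d$-ball with subcomplex boundary sphere $S=\partial B=A\cap B$, and replaces $B$ by another triangulated $d$-ball $B'$ with the \emph{same} boundary $S$. When a distinguished closed set $K$ is present one takes $B$ inside $U$ or inside $K$, so that the move respects the partition. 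Hence it suffices to show each of $b(G),b(K),b(U),b(K,U),b(U,K),b(U,U)$ is unchanged by one local surgery move.

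I would prove this first for $b(G)$. The quadratic cochains are functions on pairs of intersecting simplices, graded by the dimension sum, and the Hodge decomposition (McKean--Singer, already used above) identifies the $k$-th quadratic cohomology with $\ker L_k$, i.e.\ cocycles modulo coboundaries. Given a harmonic representative $\omega$ of a class on $G$, the first step is to \emph{push $\omega$ off the modified region}: one shows that a quadratic cocycle on the ball $B$ is, modulo coboundaries, determined by its restriction to $S$, so that $\omega$ may be replaced by a cohomologous cocycle $\tilde\omega$ whose values on all pairs $(x,y)$ meeting $\mathring B$ form a fixed normal form depending only on $\omega|_A$ and on $S$ -- this is the ``add gauge fields $dg$ to kill the cocycle near the surgery place'' step. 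Since $B'$ has the same boundary $S$, the normal form extends across $B'$ to a cocycle $\tilde\omega'$ on $G'$; applying the heat flow $e^{-tL(G')}$ and letting $t\to\infty$ projects $\tilde\omega'$ onto a harmonic form -- the ``heal the wound'' step. One then checks, using Hodge theory on both sides, that the induced map on cohomology is independent of the choices of gauge and normal form and is inverted by the symmetric construction running from $G'$ back to $G$; this makes it an isomorphism, the chain homotopy implementing the local homeomorphism, so by Euler--Poincar\'e it preserves every Betti number and $b(G)=b(G')$.

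The same argument applies to the five interaction cohomologies with no change of substance. Each $b(X)$ or $b(X,Y)$ is the cohomology of an abstract delta set $(X,D,R)$ of exactly the same shape -- functions on pairs lying in a prescribed product with intersection in a prescribed set, with $D=d+d^{*}$ and $L=D^{2}$ -- so Hodge theory, the McKean--Singer symmetry, and the heat flow are available verbatim. Because the surgery ball was chosen inside $U$ or inside $K$, the other set and all the ``intersection lies in $K$'' (or in $U$) constraints are untouched outside $\mathring B$, so the push-off / transplant / heat-flow argument goes through in each of the six complexes, and all six Betti vectors are invariant under one local move, hence under any homeomorphism of pairs.

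\textbf{Main obstacle.} The one genuinely nontrivial ingredient is the relative statement used to push the cocycle off the ball: that a quadratic cocycle on a triangulated ball $B$ is cohomologically determined by its restriction to the boundary sphere $S$, so that it can be put in a normal form supported away from $\mathring B$ and then transplanted across a different filling $B'$ of $S$. This is \emph{not} merely ``the ball is contractible'': quadratic cohomology is only a topological and not a homotopy invariant -- a $d$-ball already has Wu characteristic $(-1)^{d}$ -- so one must analyse the relative quadratic complex of $(B,S)$ directly, or equivalently verify the normal-form reduction for a generating family of stellar subdivisions of the ball, with careful bookkeeping of the pairs $(x,y)$ that straddle $\mathring B$ and $S$. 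Everything else in the proof is formal Hodge theory and heat-flow projection.
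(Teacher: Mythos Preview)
Your proposal is correct and follows essentially the same approach as the paper: the paper's entire argument is the paragraph immediately preceding the theorem --- push the cocycle off the interior of a $d$-ball by adding coboundaries $dg$, perform the surgery, then run the heat flow to recover a harmonic representative --- and you have reproduced exactly this outline, adding the Alexander--Newman reduction to local moves and the case-by-case remark that the same machinery applies to each of the six delta sets. Your identification of the ``main obstacle'' (that a quadratic cocycle on a ball can be gauged to a normal form determined by its boundary values) is apt: the paper simply asserts this step without justification, so you have in fact been more scrupulous than the original in flagging where the real work lies.
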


\paragraph{}
For $G=K_3,K=\{ \{1\} \}$, we can look at the complex 
$G(U,K)= \left[ \begin{array}{cc}
                   \{1\} & \{1,3\} \\
                   \{1\} & \{1,2\} \\
                   \{1\} & \{1,2,3\} \\
                  \end{array} \right]$ and
$D(K,U)=\left[ \begin{array}{ccc}
 0 & 0 & -1 \\
 0 & 0 & 1 \\
 -1 & 1 & 0 \\
\end{array} \right]$ with kernel spanned by $[1,1,0]$. 
For its Barycentric refinement and still $K=\{1\}$ (on the boundary), 
and where we look at functions on 
$X=\left[ \begin{array}{cc}
 \{1\} & \{1,7\} \\
 \{1\} & \{1,5\} \\
 \{1\} & \{1,4\} \\
 \{1\} & \{1,5,7\} \\
 \{1\} & \{1,4,7\} \\
\end{array} \right]$, 
we have 
$D(K,U) = \left[ \begin{array}{ccccc}
 0 & 0 & 0 & -1 & -1 \\
 0 & 0 & 0 & 1 & 0 \\
 0 & 0 & 0 & 0 & 1 \\
 -1 & 1 & 0 & 0 & 0 \\
 -1 & 0 & 1 & 0 & 0 \\
\end{array} \right]$ with kernel spanned by $[1,1,1,0,0]$. 

\section{Quadratic cohomology}

\paragraph{}
{\bf Simplicial cohomology} for a finite abstract simplicial complex $G$ is
part of the spectral theory of the {\bf Hodge Laplacian} $L=D^2$ with 
{\bf Dirac matrix} $D=d+d^*$, where $d$ is the exterior derivative. 
Note that all these matrices $d,D,L$ are $n \times n$ matrices
if $G$ has $n$ elements. The matrix $L$ is a block diagonal matrix 
$L=\oplus_{k=0}^d L_k$.  The kernels of the blocks $L_k$ of $L$ 
are the {\bf $k$-harmonic forms} or {\bf $k$-cohomology} vector spaces. 
In this finite setting, this is linear algebra \cite{Eckmann1944}.
The dimensions $b_k$ are the {\bf Betti numbers}, the components of the 
Betti vector of $G$. 

\paragraph{}
If $K$ is a subcomplex of $G$ 
and $U$ is the open complement, then the
{\bf separated system} $(K,U)$ has a Laplacian $L_{K,U}=L_{K} \oplus L_{U}$ for
which the energies $\lambda_j(L_{K,U})$ are less or equal than $2 \lambda_j(L_G)$
\cite{HodgeInequality} implying that the separated system can not have more harmonic
forms than $G$. It can have more: if $G$ is a closed 2-ball for example and 
$K$ is the boundary 1-sphere then $b_G=(1,0,0), b_K=(1,1,0)$ and $b_U=(0,0,1)$. 
The closed part $K$ carries a trapped harmonic 1-form. 
It is fused with the 2-form present on $U$, if $K,U$ get united to $G$. 

\paragraph{}
A complex $G$ defines a delta set $G=\bigcup_{k=0}^d G_k$. 
The {\bf $f$-vector} $f(G)=(f_0(G),\dots,f_d(G))$ has components 
$f_k(G)=|G_k|$, the number of elements in $G_k$. The {\bf super trace} of an $n \times n$
matrix $L$ \footnote{We write the entries as $L(x,y)$} is defined as 
${\rm str}(L)=\sum_{k=0}^d (-1)^k \sum_{x \in G_k} L(x,x)$. Compare with the usual trace
${\rm tr}(L)=\sum_{k=0}^d \sum_{x \in G_k} L(x,x) = \sum_{x \in G} L(x,x)$. The 
Euler characteristic is $\chi(G)=\sum_{x \in G} w(x)$. The {\bf Euler-Poincar\'e formula} 
$\chi(G)=\sum_k (-1)^k f_k=\sum_k (-1)^k b_k$
follows directly from the {\bf McKean-Singer identity}, stating
that ${\rm str}(\exp(-i t L))=\chi(G)$ for all $t$ which in turn follows from the fact
that the Dirac matrix $D$ gives an isomorphism between even and odd 
{\bf non-harmonic forms}. For $t=0$, the super trace of the heat kernel is the 
combinatorial Euler characteristic, while for $t=\infty$, it is the cohomological 
Euler characteristic. 

\paragraph{}
{\bf Quadratic cohomology} does not build on single simplices $x \in G$ like 
simplicial cohomology but on {\bf pairs of intersecting simplices} 
$(x,y) \in G \times G$. Define $w(x)=(-1)^{{\rm dim}(x)}$.
The quadratic analog of (linear) Euler characteristic $\chi(A)=\sum_{x \in A} w(x)$ is 
the ``Ising type" energy or {\bf Wu characteristic}
$w(A)=\sum_{x,y,x \cap y \in A}  w(x) w(y)$.  It is an example of a multi-linear 
valuation. We also just call it {\bf quadratic characteristic},
an example of {\bf higher characteristic}. \cite{CharacteristicTopologicalinvariants}.

\paragraph{}
The name ``quadratic" is chosen
because it is multi-linear and for $m=2$ a quadratic valuation. 
Similarly as a quadratic form is a multi-linear map,
linear in each argument, the quadratic characteristic 
$w(A,B) = \sum_{x \in A, y \in B, x \cap y \neq \emptyset} w(A) w(B)$ 
(or variants, where we ask the intersection to be in $A$ or $B$) satisfies 
the valuation formula in each of the coordinates, like 
$w(X,U \cup V) = w(X,U) + w(X,V)-W(X,U \cap V)$. 

\paragraph{}
Given an open-closed pair $(U,K)$, 
one can define quadratic cohomology on 
{\bf $k$-forms}. Forms are functions on 
$\Lambda(X,Y)=\{ (x,y) | x \in X, y \in Y, x \cap y \in X \}$ 
and $\Lambda(X)=\{ (x,y) | x \in X, y \in X, x \cap y \in X \}$ and 
$\Lambda(X,Y) = \{ (x,y) | x \in X, y \in Y, x \cap y \neq \emptyset, x \cap y \in K\}$. 
The $k$-forms are the forms on functions with ${\rm dim}(x)+{\rm dim}(y)=k$. 

\paragraph{}
In the case of an open-closed pair, we have five different cohomologies $U,K$,
$(U,K)$, $(K,U)$, $(U,U)$. 
There is no case $(K,K)$ because the intersection of $x \in K, y \in K$ is in $K$. The 
case $(K,K)$ is part of $K$. The case $(U,U)$ looks at pairs such that the intersection 
in in $K$ while $U$ looks at pairs such that the intersection is in $K$. 
We can have a disjoint union  
$$ \Lambda(G) = \Lambda(U) \cup \Lambda(K) \cup \Lambda(K,U) \cup \Lambda(U,K)
                           \cup \Lambda(U,U) \; . $$
\footnote{In the code we part we identify $\Lambda(U,K)$ with $\Lambda(K,U)$ so that
we do not have to probe which of the entries is the closed set. }

\paragraph{}
The {\bf exterior derivative} is inherited from the exterior derivative on products. 
It is $df(x,y) = d_x f(x,y) + w(x) d_y(f,y)$, where $d_x,d_y$ are the usual 
simplicial exterior derivatives but with respect to the first or second coordinate.
If we would look at this derivative on $X \times Y$, the Hodge Laplacians
are the tensor products of the Laplacians on $X$ and $Y$. Even if the 
set-theoretical Cartesian product $X \times Y$ is not a simplicial complex any more, 
we still have a cohomology.  But now, we restrict this exterior derivative to 
pairs $(x,y)$ that intersect. 
We are not aware of such a construction in the continuum. 

\section{Spectral Monotonicity}

\paragraph{}
The proof of the quadratic fusion inequality Theorem~(\ref{fusioninequality}) 
is analog to the linear case. The key is that in each case, the matrix $L$ is the 
square $L=D^2$ of a matrix $D$ which has the property that
a principal sub-matrix of $D$ has intertwined spectrum so that the left padded 
spectral functions of $L$ are monotone. This looks like a technical detail but it 
is important and {\bf at the heart of the entire story}: the matrix $L$ does not have
the property that taking away highest or lowest dimensional simplices produces
principal sub-matrices which themselves come from a geometry.
But the Dirac matrix $D$ does have the property. And since
$D$ has symmetric spectrum with respect to the origin and $D^2=L$, we have also 
monotonocity for $L$.

\paragraph{}
Let us formulate the Cauchy interlace theorem a bit
differently, than usual. The point is that if a principal submatrix $B$ of a self-adjoint
matrix $A$ has the eigenvalues padded left when compared to the eigenvalues of $A$ then
there is a direct comparison between all eigenvalues. This is very general and allows
to talk about monotonicity rather than interlacing. 

\begin{lemma}[Left Padded Monotonicity] 
\label{leftpaddedmonotonicity}
Let $A$ be a symmetric $n \times n$ matrix and $m$ a principal $m \times m$ submatrix,
denote by $\lambda_1 \leq \cdots \leq \lambda_{n}$ the eigenvalues of $A$ and 
$\mu_{n-m} \leq \cdots \leq \mu_n$ the eigenvalues of $B$. Then 
$\mu_k \leq \lambda_k$ for all $n-m \leq k \leq n$. 
\end{lemma}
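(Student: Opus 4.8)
The plan is to reduce this to the classical Cauchy interlacing theorem, which states that if $B$ is an $m \times m$ principal submatrix of a symmetric $n \times n$ matrix $A$, with eigenvalues $\nu_1 \leq \cdots \leq \nu_m$ for $B$ and $\lambda_1 \leq \cdots \leq \lambda_n$ for $A$, then $\lambda_j \leq \nu_j \leq \lambda_{j + n - m}$ for every $1 \leq j \leq m$. The only real content here is bookkeeping: matching the paper's indexing convention (where the eigenvalues of $B$ are labelled $\mu_{n-m} \leq \cdots \leq \mu_n$, i.e. ``padded left'' so the $j$-th eigenvalue of $B$ sits at position $j + (n-m)$) against the standard statement. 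So I would first restate classical Cauchy interlacing in the $1,\dots,m$ indexing, then perform the re-indexing $\mu_k := \nu_{k - (n-m)}$ for $n - m \leq k \leq n$ (I note the apparent off-by-one: $B$ has $m$ eigenvalues but the displayed range $n-m \leq k \leq n$ lists $m+1$ integers; I would silently read this as $n-m+1 \leq k \leq n$, or equivalently state it as the $m$ eigenvalues $\mu_{n-m+1}, \dots, \mu_n$). Under this substitution the inequality $\nu_j \leq \lambda_{j+n-m}$ becomes exactly $\mu_k \leq \lambda_k$ with $k = j + n - m$, which is the claim.

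To make the note self-contained I would actually prove Cauchy interlacing rather than cite it, using the Courant–Fischer min-max characterization. Writing $A$ with respect to an orthonormal basis so that $B$ acts on the coordinate subspace $W = \mathrm{span}(e_{i_1},\dots,e_{i_m})$, one has for the $j$-th smallest eigenvalue of $B$:
\[
\nu_j = \min_{\substack{S \subseteq W \\ \dim S = j}} \ \max_{\substack{v \in S \\ v \neq 0}} \frac{\langle Av, v\rangle}{\langle v, v\rangle},
\]
since for $v \in W$ we have $\langle Av, v\rangle = \langle Bv, v\rangle$. Comparing with the same formula for $\lambda_j$ over all $j$-dimensional subspaces $S$ of the full space gives $\lambda_j \leq \nu_j$ immediately, because minimizing over a smaller family of subspaces can only increase the value. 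The upper bound $\nu_j \leq \lambda_{j+n-m}$ follows symmetrically by applying the lower bound to $-A$ and $-B$, or directly from the dual max-min formula together with a dimension count: a $(j+n-m)$-dimensional subspace realizing $\lambda_{j+n-m}$ meets $W$ in a subspace of dimension at least $j$, on which the Rayleigh quotient is bounded by $\lambda_{j+n-m}$.

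The main obstacle is not mathematical depth — this is genuinely just Cauchy interlacing — but getting the index conventions exactly right, since the ``left padded'' convention is nonstandard and the statement as typeset has the small range-counting slip mentioned above. I would be careful to state explicitly which of the $m$ eigenvalues of $B$ is being identified with which position, and to double-check the direction: ``padded left'' means the eigenvalues of $B$ are aligned with the \emph{smallest} eigenvalues of $A$, so that $\mu_k \leq \lambda_k$ holds (each eigenvalue of $B$ is dominated by the eigenvalue of $A$ in the same slot), which is precisely the inequality needed downstream to conclude that $L = D^2$ restricted to a sub-geometry has fewer zero eigenvalues, hence the fusion inequality $b(G) \leq \sum b(X,Y)$. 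One remark worth including: the lemma is applied not to the Hodge Laplacians $L$ directly but to the Dirac matrices $D$, whose spectra are symmetric about $0$; squaring then transports left-padded monotonicity of $D$ to left-padded monotonicity of $L$, which is exactly the point flagged in the preceding paragraph of the paper.
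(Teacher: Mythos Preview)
Your proposal is correct and follows essentially the same route as the paper: both reduce the lemma to Cauchy interlacing, the paper by citing the one-step interlace theorem and inducting on the number of deleted rows/columns, you by invoking the general $m\times m$ form directly and supplying a Courant--Fischer proof. Your write-up is considerably more careful than the paper's two-line sketch---you correctly flag the off-by-one in the index range and make the re-indexing $\mu_k=\nu_{k-(n-m)}$ explicit; one small verbal slip: left-padding places the eigenvalues of $B$ in the \emph{top} slots $n-m+1,\dots,n$ (aligned with the largest $\lambda_k$), not the smallest, though your formulas and the resulting inequality $\mu_k\le\lambda_k$ are unaffected.
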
 

\begin{proof}
This follows directly from the interlace theorem and induction
with respect to $m$. Both induction assumption as well as the 
induction steps involve the interlace theorem. 
\end{proof} 

\begin{figure}[!htpb]
\scalebox{1.0}{\includegraphics{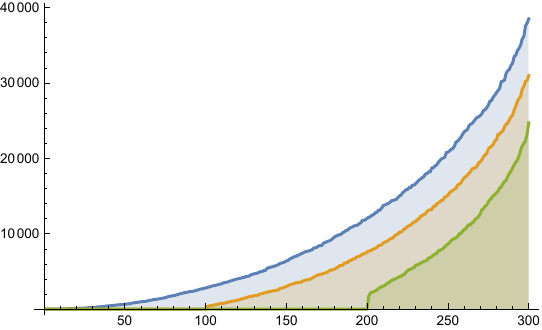}}
\label{rp4}
\caption{
We see the sorted eigenvalues of a random real self-adjoint $300 \times 300$ 
matrix $A_0$, then the eigenvalues of a $200 \times 200$ principal submatrix 
$A_1$ and then the eigenvalues of a $100 \times 100$ principal submatrix
$A_2$ of $A_1$. The eigenvalues are padded left. The figure illustrates 
Lemma~(\ref{leftpaddedmonotonicity}). The code which gave the output is
listed below. 
}
\end{figure} 

\begin{tiny}
\lstset{language=Mathematica} \lstset{frameround=fttt}
\begin{lstlisting}[frame=single]
n=300; m=100; B=Table[20*Random[]-10,{n},{n}]; A0=Transpose[B].B;
A=A0; Do[A1=Transpose[Delete[Transpose[Delete[A,1]],1]]; A=A1,{m}];
A=A1; Do[A2=Transpose[Delete[Transpose[Delete[A,1]],1]]; A=A2,{m}];
T=Eigenvalues;S=Sort;{a,b,c}=PadLeft[{S[T[A0]],S[T[A1]],S[T[A2]]}];
ListPlot[{a,b,c},Joined->True,Filling->Bottom,PlotRange->All];
\end{lstlisting}
\end{tiny} 

\paragraph{}
We can now look at the map $K \to \lambda(K)$ giving for each sub-complex $K$
the spectral function ordered in an ascending way and padded left.
The partial order on sub-simplicial complexes and the partial order on 
spectral functions are compatible:

\begin{coro}
The maps $X \to \lambda(X)$ and
$(X,Y) \to \lambda(X,Y)$ preserve the partial orders in 
the sense that if we remove maximal simplices from closed 
sets, or minimal simplicial from open sets, then the
spectral functions can only get smaller. 
\end{coro}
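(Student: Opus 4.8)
The plan is to deduce the corollary directly from the Left Padded Monotonicity Lemma~(\ref{leftpaddedmonotonicity}): I would show that each of the allowed combinatorial moves realizes the quadratic Dirac matrix of the smaller object as a \emph{principal submatrix} of the Dirac matrix of the larger one, and then pass from $D$ to $L=D^2$ using the reflection symmetry of $\mathrm{spec}(D)$, exactly as in the linear fusion inequality and in Theorem~(\ref{spectralinequality}).

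First I would analyze a single elementary move: deleting a facet $x_0$ from a closed set $K$ (the case of deleting a minimal simplex from an open set $U$ is dual and uses the side condition in place of maximality). For the intrinsic complex $\Lambda(K)$ and for the interaction complexes $\Lambda(K,U),\Lambda(U,K),\Lambda(U,U)$, the only pairs that mention $x_0$ are those of the form $(x_0,y)$ or $(y,x_0)$: since $x_0$ is maximal, no simplex $a\neq x_0$ has $x_0\subseteq a$, so $x_0$ can never appear as an intersection $a\cap b$, and hence no other pair silently drops out of $\Lambda$ when $x_0$ is removed. I would then check that restricting the quadratic Dirac matrix to the surviving index set returns precisely $D(K\setminus\{x_0\})$, using that the coefficients in $df(x,y)=d_xf(x,y)+w(x)d_yf(x,y)$ between two surviving pairs do not depend on the ambient complex, while any connection from a surviving pair to a removed pair lies entirely in a deleted row and column. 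Thus $D(K\setminus\{x_0\})$, and likewise the Dirac matrices of the associated interaction complexes, are principal submatrices; composing moves along the partial order keeps this property by transitivity.

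For the spectral step, recall that the quadratic Dirac matrix $D$ anti-commutes with the parity operator on the total grading $\dim(x)+\dim(y)$ (the McKean--Singer symmetry for quadratic cohomology), so $\mathrm{spec}(D)=-\mathrm{spec}(D)$ and the sorted ascending spectrum of $L=D^2$ is a block of zeros coming from $\ker D=\ker L$ followed by the squares of the positive eigenvalues of $D$. Cauchy interlacing for a principal submatrix $D'$ of $D$ of codimension $c$, together with this symmetry, shows that for every threshold $t\ge 0$ the window $[-\sqrt t,\sqrt t]$ contains at most $c$ fewer eigenvalues of $D'$ than of $D$; equivalently, after padding the spectrum of $L'=(D')^2$ on the left with $c$ zeros it is dominated entrywise by the sorted spectrum of $L=D^2$. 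Applying this to the moves above gives $\lambda_k(L(X'))\le\lambda_k(L(X))$ and $\lambda_k(L(X',Y))\le\lambda_k(L(X,Y))$ (and the same in the second slot), i.e. the spectral functions can only get smaller, which is the assertion of the corollary.

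The step I expect to be the main obstacle is the first one: checking uniformly across all five interaction complexes that the combinatorial move really induces a principal submatrix, i.e. that the side condition $x\cap y\in K$ (or $x\cap y\in X$) interacts correctly with removing $x_0$, that a pair never leaves $\Lambda$ except together with one of its deleted rows or columns, and that maximality or minimality is preserved along a chain of such moves. Once this combinatorial fact is secured, the spectral half is nothing more than Cauchy interlacing plus the $\pm$-symmetry of the Dirac spectrum, which is the same engine that drives the linear fusion inequality.
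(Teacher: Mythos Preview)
Your proposal is correct and follows the same line as the paper: the corollary is stated there without proof, as an immediate consequence of the Left Padded Monotonicity Lemma together with the observation (made in the paragraph just before the corollary) that removing a maximal simplex from a closed set, or a minimal one from an open set, deletes rows and columns of the Dirac matrix and hence yields a principal submatrix, with the passage from $D$ to $L=D^2$ handled by the $\pm$-symmetry of $\mathrm{spec}(D)$. You simply spell out in more detail the combinatorial check that no pair silently leaves $\Lambda$ except via a deleted row or column, and the counting argument that transfers interlacing from $D$ to $L$; the paper leaves both of these implicit.
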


\paragraph{}
The same holds by iterating the process and take principal 
$(n-k) \times (n-k)$ sub-matrices. Now, if we look at a Dirac 
matrix of a closed set $K$ and take a maximal simplex $x$ away,
then we get monotonicity. The same happens if we take a minimal 
simplex $x$ away from an open set. Note that if we look at 
pairs $(x,y)$ belonging to some pair like $(K,U)$ and we take
a maximal element $x$ away, then several pairs $(x,y_i)$ are removed
from the complex on $(K,U)$. 

\begin{thm}[Spectral monotonicity]
\label{1}
For all $j \leq n$ we have 
$\lambda_j(K) \leq \lambda_j(G)$, \\
$\lambda_j(K) \leq \lambda_j(G)$, \\ 
$\lambda_j(K,U) \leq \lambda_j(G)$, \\ 
$\lambda_j(U,K) \leq \lambda_j(G)$, \\ 
$\lambda_j(U,U) \leq \lambda_j(G)$, \\ 
\end{thm}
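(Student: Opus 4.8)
The plan is to realize each Laplacian $L(X)$ or $L(X,Y)$ as $D(X)^2$ where $D(X)$ is a principal submatrix of the quadratic Dirac matrix $D(G)$, and then invoke Lemma~\ref{leftpaddedmonotonicity} together with the symmetry of the spectrum of $D(G)$ about the origin. First I would set up the index bookkeeping: the rows and columns of $D(G)$ are indexed by the set $\Lambda(G)$ of intersecting pairs $(x,y) \in G \times G$, and the decomposition $\Lambda(G) = \Lambda(U) \cup \Lambda(K) \cup \Lambda(K,U) \cup \Lambda(U,K) \cup \Lambda(U,U)$ is a disjoint union. The key structural claim is that for each of the five pieces $\Lambda'$ in this decomposition, the principal submatrix $D(G)|_{\Lambda'}$ obtained by restricting $D(G)$ to the rows and columns in $\Lambda'$ is exactly the Dirac matrix $D'$ of the corresponding abstract delta set, so that $(D(G)|_{\Lambda'})^2 = L'$ is the Laplacian of that cohomology. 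This is the quadratic analogue of the fact used in the linear case \cite{HodgeInequality}: restricting the simplicial Dirac matrix to a closed set (or to an open set) gives the Dirac matrix of that subsystem.

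The heart of the argument is verifying that claim. The exterior derivative on pairs is $df(x,y) = d_x f(x,y) + w(x)\, d_y f(x,y)$, so a matrix entry $D(G)((x,y),(x',y'))$ is nonzero only when the pair $(x',y')$ is obtained from $(x,y)$ by modifying one coordinate by a simplicial coface or subface relation while the other coordinate is unchanged; and the value is the corresponding incidence number (with the sign twist $w(x)$ in the second-coordinate case). I would check that if $(x,y) \in \Lambda'$ for one of the five pieces, then every pair $(x',y')$ that $D(G)$ connects to it and that still lies in $\Lambda(G)$ actually lies in the same $\Lambda'$. For the two intrinsic pieces $\Lambda(K)$ and $\Lambda(U)$ this is just the closed/open stability under coface/subface moves of simplicial complexes. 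For the mixed pieces one must also track the membership constraint on $x \cap y$: e.g. for $\Lambda(U,K)$, with $x \in U, y \in K$ and $x \cap y \in K$, moving $x$ to a coface or $y$ to a subface keeps $x \in U$, $y \in K$, and keeps $x \cap y \subseteq y \in K$ hence in $K$ (since $K$ is closed); I would record that the only places where a move could leave $\Lambda'$ are moves that leave $\Lambda(G)$ entirely (intersection becomes empty), and those contribute zero to the restricted matrix anyway. Once this "no leakage" property is established, the restricted matrix is block-isolated and equals $D'$ on the nose.

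With the structural claim in hand, the conclusion is immediate. Since $\Lambda' \subseteq \Lambda(G)$, the matrix $D'$ is a principal $m \times m$ submatrix of the $n \times n$ matrix $D(G)$, so by Lemma~\ref{leftpaddedmonotonicity} (applied to $A = D(G)$, $B = D'$) the left-padded ascending eigenvalues satisfy $\mu_k(D') \le \lambda_k(D(G))$ for all indices $k$ in range. Now both $D(G)$ and $D'$ are symmetric with spectrum symmetric about $0$ (being Dirac matrices $d + d^*$ with the usual McKean–Singer grading), so squaring is order-preserving on the nonnegative half and, after left-padding with zeros, one gets $\lambda_j(L') = \mu_{\lceil j \rceil}(D')^2 \le \lambda_{\lceil j \rceil}(D(G))^2 = \lambda_j(L(G))$; more precisely, the sorted squared-and-padded eigenvalue sequence of $L' = (D')^2$ is dominated term by term by that of $L(G) = D(G)^2$, exactly as in the argument sketched in the "Spectral Monotonicity" section. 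Applying this to each of the five pieces $\Lambda(K), \Lambda(U), \Lambda(K,U), \Lambda(U,K), \Lambda(U,U)$ gives the five inequalities simultaneously.

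The main obstacle I anticipate is purely the "no leakage" verification for the mixed pieces $(K,U)$, $(U,K)$, $(U,U)$: one has to be careful that the three constraints defining each $\Lambda'$ (first coordinate in $U$ or $K$, second coordinate in $U$ or $K$, and intersection in $K$ or $U$) are all preserved, or else only violated by moves that already exit $\Lambda(G)$. The asymmetry flagged in the introduction — that $K$ is a $\pi$-system but $U$ is not — is precisely what makes $\Lambda(U,U)$ (pairs in $U \times U$ with $x \cap y \in K$) a genuine, nontrivially-constrained piece rather than something vacuous, and it is the one case where I would double-check that coface moves on $x$ or $y$ cannot push $x \cap y$ out of $K$ and into $U$ while staying in $\Lambda(G)$; since enlarging $x$ can only enlarge $x \cap y$, and $K$ closed does not help here, the correct observation is that a move landing in $\Lambda(G)$ with $x \cap y \in U$ would have to already be recorded in $\Lambda(U)$, so the off-block entry of $D(G)$ between $\Lambda(U,U)$ and $\Lambda(U)$ is what needs to vanish — and it does, because the exterior derivative changes only one coordinate, so it cannot simultaneously be a legal $\Lambda(G)$-move and change which block the intersection-membership places the pair in. Spelling this out cleanly is the one place the proof needs genuine care; everything after it is Lemma~\ref{leftpaddedmonotonicity} plus the symmetry of Dirac spectra.
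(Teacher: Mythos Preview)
Your overall strategy --- identify each $D(X)$ or $D(X,Y)$ as a principal submatrix of the quadratic Dirac matrix $D(G)$, apply Lemma~\ref{leftpaddedmonotonicity}, then use the sign-symmetry of the Dirac spectrum to pass to $L=D^2$ --- is exactly the paper's argument (the paper phrases it as iterated removal of locally maximal simplices, each step deleting rows and columns of $D$, but the mechanism is the same).

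The ``no leakage'' verification you spend most of the proposal on is, however, both \emph{false} and \emph{unnecessary}. It is false: $D(G)$ is not block-diagonal over the five pieces. For a counterexample take $x=\{1,2\}$, $y=\{2,3\}$, $x'=\{1,2,3\}$ all in $U$ with the vertex $\{2\}\in K$; then $(x,y)\in\Lambda(U,U)$ since $x\cap y=\{2\}\in K$, but the single-coordinate coface move $x\to x'$ lands at $(x',y)\in\Lambda(U)$ since $x'\cap y=y\in U$, and $D(G)$ has a nonzero entry between them. Your claim that a one-coordinate move ``cannot change which block the intersection-membership places the pair in'' is therefore wrong. Indeed, if no leakage held, $L(G)$ would equal the direct sum of the five block Laplacians and the fusion inequality would be an equality --- which already the $K_2$ example in the paper refutes. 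It is unnecessary because ``principal submatrix'' only concerns entries with \emph{both} indices in $\Lambda'$; entries crossing into the complement are simply discarded, not required to vanish. The identity $D(G)|_{\Lambda'}=D'$ holds immediately from the definitions: for $(x,y),(x',y')\in\Lambda'$ the entry in either matrix is the same signed incidence number coming from the single-coordinate face relation, so no case analysis is needed. Drop the no-leakage step and your final paragraph is the complete and correct proof.
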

\begin{proof}
If we add a locally maximal simplex to a given complex, the spectrum changes monotonically
by interlace. For any vector $u$, 
$\langle u, L u \rangle = \langle u, D^2 u \rangle = \langle Du, Du \rangle = ||Du||^2$
Define $\mathcal{S}_k = \{ V \subset \mathbb{R}^n, {\rm dim}(V)=k \}$
$$ \lambda_k(K) = {\rm min}_{V \in \mathcal{S}_k} {\rm max}_{|u|=1, u \in V} \langle u, L(K) u \rangle
 \leq {\rm min}_{V \in \mathcal{S}_k} {\rm max}_{|u|=1, u \in V} \langle u, L(G) u \rangle 
  = \lambda_k(G) \; . $$
As for the {\bf interlace theorem} applied to $D$ as the Dirac matrix of $K$
is obtained from the Dirac matrix of $L$ by deleting the row and column belonging to the element $x$ which
was added. The eigenvalues of the Dirac matrix $D_K$ of $K$ are now interlacing the eigenvalues of
the Dirac matrix $D_G$ of $G$.
\end{proof}

\paragraph{}
In the quadratic case, taking a way a largest dimensional simplex (facet) $x$ will
affect in general various pairs of simplices $(x,y)$ or $(y,x)$. The effect is 
that the quadratic Dirac matrix of $G \setminus x$ is still a principal sub-matrix. 
We still have spectral monotonicity.

\paragraph{}
To conclude the proof of Theorem~(\ref{fusioninequality}), 
 write down the decoupled Laplacian 
$L(U) \oplus L(K) \oplus L(K,U) \oplus L(U,K) \oplus L(U,U)$ which is block
diagonal and is a $n \times n$ matrix, the same size than $L(G)$. 
Lets call its eigenvalues $\mu_k$. From the spectral inequalities for each block, we know 
$$ 0 \leq \mu_k \leq 5 \lambda_k \; . $$
where $\lambda_k$ are the eigenvalues of the quadratic Hodge Laplacian of $G$. 
Therefore, there are at least as many 0 eigenvalues for the decoupled system
than for $G$, proving the inequality. 

\section{An example}

\paragraph{}
Here is an example with the {\bf Kite complex} $G=K_{1,2,1}$, where we have a 
complex with 2 triangles. We will see what happens if we take one of the 
triangles away. We look at the case $(U,U)$. The Dirac matrix $D(U,U)$ is 
a $14 \times 14$ matrix. 
$$ 
\left[
                  \begin{array}{cccccccccccccc}
                   0 & 0 & 0 & 0 & -1 & 0 & 0 & 0 & -1 & 0 & 0 & 0 & 0 & 0 \\
                   0 & 0 & 0 & 0 & 0 & 0 & -1 & 0 & 0 & -1 & 0 & 0 & 0 & 0 \\
                   0 & 0 & 0 & 0 & 0 & -1 & 0 & 0 & 0 & 0 & -1 & 0 & 0 & 0 \\
                   0 & 0 & 0 & 0 & 0 & 0 & 0 & -1 & 0 & 0 & 0 & -1 & 0 & 0 \\
                   -1 & 0 & 0 & 0 & 0 & 0 & 0 & 0 & 0 & 0 & 0 & 0 & 1 & 0 \\
                   0 & 0 & -1 & 0 & 0 & 0 & 0 & 0 & 0 & 0 & 0 & 0 & 1 & 0 \\
                   0 & -1 & 0 & 0 & 0 & 0 & 0 & 0 & 0 & 0 & 0 & 0 & 0 & 1 \\
                   0 & 0 & 0 & -1 & 0 & 0 & 0 & 0 & 0 & 0 & 0 & 0 & 0 & 1 \\
                   -1 & 0 & 0 & 0 & 0 & 0 & 0 & 0 & 0 & 0 & 0 & 0 & -1 & 0 \\
                   0 & -1 & 0 & 0 & 0 & 0 & 0 & 0 & 0 & 0 & 0 & 0 & 0 & -1 \\
                   0 & 0 & -1 & 0 & 0 & 0 & 0 & 0 & 0 & 0 & 0 & 0 & -1 & 0 \\
                   0 & 0 & 0 & -1 & 0 & 0 & 0 & 0 & 0 & 0 & 0 & 0 & 0 & -1 \\
                   0 & 0 & 0 & 0 & 1 & 1 & 0 & 0 & -1 & 0 & -1 & 0 & 0 & 0 \\
                   0 & 0 & 0 & 0 & 0 & 0 & 1 & 1 & 0 & -1 & 0 & -1 & 0 & 0 \\
                  \end{array}
                  \right] \; . $$
The Eigenvalues of the Laplacian $L(U,U)=D(U,U)^2$ are 
$\{ 4, 4, 4, 4, 2, 2, 2, 2, 2, 2, 2, 2, 0, 0 \}$. 

\paragraph{}
Now lets take away the simplicies which do not involve the triangle
$(1,3,4)$. We have to select the rows and columns in 
$\{ 1, 2, 3, 4, 7, 8, 9, 11 \}$. The Dirac matrix is 
$$ \left[ \begin{array}{cccccccc}
                  0 & 0 & 0 & 0 & 0 & 0 & -1 & 0 \\
                  0 & 0 & 0 & 0 & -1 & 0 & 0 & 0 \\
                  0 & 0 & 0 & 0 & 0 & 0 & 0 & -1 \\
                  0 & 0 & 0 & 0 & 0 & -1 & 0 & 0 \\
                  0 & -1 & 0 & 0 & 0 & 0 & 0 & 0 \\
                  0 & 0 & 0 & -1 & 0 & 0 & 0 & 0 \\
                  -1 & 0 & 0 & 0 & 0 & 0 & 0 & 0 \\
                  0 & 0 & -1 & 0 & 0 & 0 & 0 & 0 \\
                 \end{array} \right] \; . $$
The eigenvalues of $L$ are now 
$\{ 1, 1, 1, 1, 1, 1, 1, 1 \}$. 

\pagebreak

\section{Code}

\begin{tiny}
\lstset{language=Mathematica} \lstset{frameround=fttt}
\begin{lstlisting}[frame=single]
Generate[A_]:=If[A=={},{},Sort[Delete[Union[Sort[Flatten[Map[Subsets,A],1]]],1]]];
L=Length; Whitney[s_]:=Generate[FindClique[s,Infinity,All]]; L2[x_]:=L[x[[1]]]+L[x[[2]]];
(* Linear Cohomology  *)
sig[x_]:=Signature[x]; nu[A_]:=If[A=={},0,L[A]-MatrixRank[A]];
F[G_]:=Module[{l=Map[L,G]},If[G=={},{},Table[Sum[If[l[[j]]==k,1,0],{j,L[l]}],{k,Max[l]}]]];
sig[x_,y_]:=If[SubsetQ[x,y]&&(L[x]==L[y]+1),sig[Prepend[y,Complement[x,y][[1]]]]*sig[x],0];
Dirac[G_]:=Module[{f=F[G],b,d,n=L[G]},b=Prepend[Table[Sum[f[[l]],{l,k}],{k,L[f]}],0];
 d=Table[sig[G[[i]],G[[j]]],{i,n},{j,n}]; {d+Transpose[d],b}];
Hodge[G_]:=Module[{Q,b,H},{Q,b}=Dirac[G];H=Q.Q;Table[Table[H[[b[[k]]+i,b[[k]]+j]],
 {i,b[[k+1]]-b[[k]]},{j,b[[k+1]]-b[[k]]}],{k,L[b]-1}]];
Betti[s_]:=Module[{G},If[GraphQ[s],G=Whitney[s],G=s];Map[nu,Hodge[G]]];
Fvector[A_]:=Delete[BinCounts[Map[Length,A]],1];
Euler[A_]:=Sum[(-1)^(Length[A[[k]]]-1),{k,Length[A]}];
(* Quadratic Cohomology  *)
F2[G_]:=Module[{},If[G=={},{},Table[Sum[If[L2[G[[j]]]==k,1,0],{j,L[G]}],{k,Max[Map[L2,G]]}]]];
ev[L_]:=Sort[Eigenvalues[1.0*L]];
WuComplex[A_,B_,opts___]:=Module[{Q={},x,y,u},
 Do[x=A[[k]];y=B[[l]];u=Intersection[x,y];
    If[((opts=="Open" && Not[x==y] && L[u]>0 && Not[MemberQ[A,u]]) ||
       (Not[opts=="Open"] &&                        MemberQ[A,u])),
    Q=Append[Q,{x,y}]],{k,L[A]},{l,L[B]}];Sort[Q,L2[#1]<L2[#2] &]];
Dirac[G_,H_,opts___]:=Module[{n=L[G],Q,m=L[H],b,d1,d2,h,v,w,l,DD}, Q=WuComplex[G,H,opts];
  n2=L[Q];   f2=F2[Q];   b=Prepend[Table[Sum[f2[[l]],{l,k}],{k,L[f2]}],0];
  D1[{x_,y_}]:=Table[{Sort[Delete[x,k]],y},{k,L[x]}];
  D2[{x_,y_}]:=Table[{x,Sort[Delete[y,k]]},{k,L[y]}];
  d1=Table[0,{n2},{n2}]; Do[v=D1[Q[[m]]]; If[L[v]>0,Do[r=Position[Q,v[[k]]];
    If[r!={},d1[[m,r[[1,1]]]]=(-1)^k],{k,L[v]}]],{m,n2}];
  d2=Table[0,{n2},{n2}]; Do[v=D2[Q[[m]]]; If[L[v]>0, Do[r=Position[Q,v[[k]]];
    If[r!={},d2[[m,r[[1,1]]]]=(-1)^(L[Q[[m,1]]]+k)],{k,L[v]}]],{m,n2}];
  d=d1+d2; DD=d+Transpose[d]; {DD,b}]; 
Beltrami[G_,H_,opts___]:=Module[{Q,P,b},{Q,b}=Dirac[G,H,opts];P=Q.Q];
Hodge[G_,H_,opts___]:=Module[{Q,P,b},{Q,b}=Dirac[G,H,opts];P=Q.Q;
 Table[Table[P[[b[[k]]+i,b[[k]]+j]], {i,b[[k+1]]-b[[k]]},{j,b[[k+1]]-b[[k]]}],{k,2,L[b]-1}]];  
Betti[G_,H_,opts___]:=Map[nu,Hodge[G,H,opts]];
Wu[A_,B_,opts___]:=Sum[x=A[[k]];y=B[[l]];u=Intersection[x,y];
  If[(opts=="Open" && Not[x==y] && L[u]>0 && Not[MemberQ[A,u]]) ||
     (Not[opts=="Open"]    &&                    MemberQ[A,u]),
     (-1)^L2[{x,y}],0],{k,L[A]},{l,L[B]}];
Fvector[A_,B_,opts___]:=Module[{a=F2[WuComplex[A,B,opts]]},Table[a[[k]],{k,2,L[a]}]];

s = CompleteGraph[{1,2,1}];  G = Whitney[s]; K = Generate[{{1,4}}]; U=Complement[G,K]; 
Print["Linear Cohomology"];
{bU,bK,bG}=PadRight[{Betti[U],Betti[K],Betti[G]}];
{fU,fK,fG}=PadRight[{Fvector[U],Fvector[K],Fvector[G]}];
Print[ Grid[{
   {"Case", "Betti","F-vector","Euler"}, {"U", bU,fU,  Euler[U]},
   {"K", bK,fK,  Euler[K]}, {"G", bG,fG,  Euler[G]},
   {"Compare",bU+bK-bG,fU+fK-fG, Euler[U]+Euler[K]-Euler[G]}}]];
Print["Quadratic Cohomology"];
{bU,bK,bKU,bUK,bUU,bG}=PadRight[{Betti[U,U,"Closed"],Betti[K,K,"Closed"], 
   Betti[K,U,"Closed"],Betti[U,K,"Closed"], Betti[U,U,"Open"],Betti[G,G,"Closed"]}]; 
  {fU,fK,fKU,fUK,fUU,fG}=PadRight[{Fvector[U,U,"Closed"],Fvector[K,K,"Closed"], 
  Fvector[K,U,"Closed"],Fvector[U,K,"Closed"], Fvector[U,U,"Open"],  Fvector[G,G,"Closed"]}];
Print[ Grid[{ {"Case","Betti","F-vector","Wu"},{"U",bU,fU,Wu[U,U,"Closed"]},
  {"K",bK,fK,Wu[K,K,"Closed"]},{"UK",bKU,fKU,Wu[K,U,"Closed"]},{"KU",bKU,fKU,Wu[K,U,"Closed"]},
  {"UU",bUU,fUU,Wu[U,U,"Open"]},{"G", bG, fG, Wu[G,G,"Closed"]},
  {"Compare",bU+bK+bKU+bKU+bUU-bG,fU+fK+fKU+fKU+fUU-fG,
  Wu[U,U,"Closed"]+Wu[K,K,"Closed"]+2Wu[K,U,"Closed"]+Wu[U,U,"Open"]-Wu[G,G,"Closed"]}}]];

\end{lstlisting}
\end{tiny}

\paragraph{}
Here is the output of the above lines for simplicial cohomology

$$ \begin{array}{|c|c|c|c|} \hline
 \text{Case} & \text{Betti} & \text{F-vector} & \text{Euler} \\ \hline
 \text{U} & \{0,0,0\} & \{2,4,2\} & 0 \\
 \text{K} & \{1,0,0\} & \{2,1,0\} & 1 \\
 \text{G} & \{1,0,0\} & \{4,5,2\} & 1 \\ \hline
 \text{Compare} & \{0,0,0\} & \{0,0,0\} & 0 \\ \hline 
\end{array} \; . $$

And here the output table for the quadratic cohomology part: 

$$ \begin{array}{|c|c|c|c|} \hline
 \text{Case} & \text{Betti} & \text{F-vector} & \text{Wu} \\ \hline
 \text{U} & \{0,0,0,0,0\} & \{2,8,12,8,2\} & 0 \\
 \text{K} & \{0,1,0,0,0\} & \{2,4,1,0,0\} & -1 \\
 \text{UK} & \{0,0,2,0,0\} & \{0,4,8,2,0\} & 2 \\
 \text{KU} & \{0,0,2,0,0\} & \{0,4,8,2,0\} & 2 \\
 \text{UU} & \{0,0,0,2,0\} & \{0,0,4,8,2\} & -2 \\
 \text{G} & \{0,0,1,0,0\} & \{4,20,33,20,4\} & 1 \\ \hline
 \text{Compare} & \{0,1,3,2,0\} & \{0,0,0,0,0\} & 0 \\  \hline
\end{array}  \; . $$

\begin{figure}[!htpb]
\scalebox{1.0}{\includegraphics{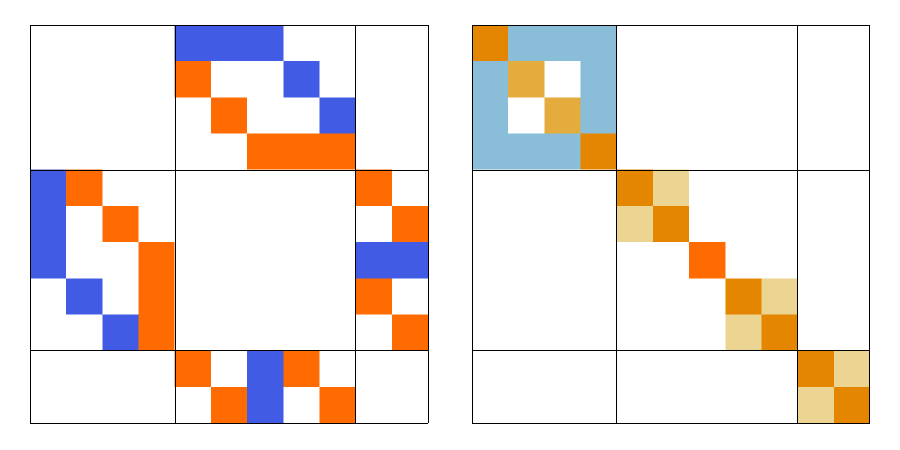}}
\label{rp4}
\caption{
The Dirac matrix $D$ and the Hodge Laplacian $L=D^2$ in the 
linear case for the kite graph $G$. 
The splittings are given by the f-vector $f(G)=(4,5,2)$. 
There are 4 points, 5 edges and 2 triangles in $G$. 
}
\end{figure}

\begin{figure}[!htpb]
\scalebox{1.0}{\includegraphics{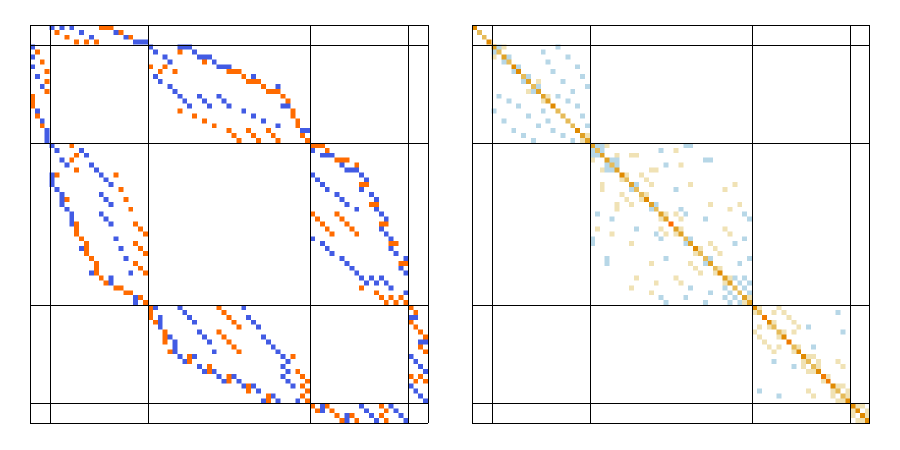}}
\label{rp4}
\caption{
The Dirac matrix $D$ and the Hodge Laplacian $L=D^2$ in the
quadratic case for the kite graph. 
The splittings are given by the f-vector $f(G)=(4,20,33,20,4)$. 
The space of $1$-forms (intersecting  points) is $4$-dimensional,
the space of $2$-forms (intersection of a point with an edges) is $20$-dimensional, 
the space of $3$-forms (intersection of two edges or a triangle-point has dimension $33$),
the space of $4$ forms (intersection of an edge and triangle) is $20$-dimensional,
the space of $5$ forms (intersection of two triangles) is $4$-dimensional.
}
\end{figure}

\bibliographystyle{plain}

\end{document}